\journalname{Applied Mathematics and Computation}
\newtheorem{theorem}{Theorem}
\newtheorem{proposition}[theorem]{Proposition}
\newenvironment{proof}[1][Proof]{\textbf{#1. }}{\ \rule{0.5em}{0.5em}\\}
\newtheorem{remark}[theorem]{Remark}
\begin{document}
\begin{frontmatter}



\dochead{}

\title{Newton's method's basins of attraction revisited}


\author[unup]{H.\ Susanto},
\author[unmc]{N.\ Karjanto\corref{cor1}}
\cortext[cor1]{Corresponding author.}
\ead{natanael.karjanto@nottingham.edu.my}
\address[unup]{School of Mathematical Sciences, University of Nottingham,
University Park, Nottingham, NG7 2RD, UK}
\address[unmc]{Department of Applied Mathematics, Faculty of Engineering, The University of Nottingham Malaysia Campus\\ Semenyih 43500, Selangor, Malaysia}

\begin{abstract} 
In this paper, we revisit the chaotic number of iterations needed by Newton's method to converge to a root. Here, we consider a simple modified Newton method depending on a parameter. It is demonstrated using polynomiography that even in the simple algorithm the presence and the position of the convergent regions, i.e.\ regions where the method converges nicely to a root, can be complicatedly a function of the parameter. 
\hfill \copyright \ 2009 Elsevier Inc. All rights reserved.
\end{abstract}

\begin{keyword}
Newton--Raphson methods \sep iteration methods \sep nodules.
\end{keyword}
\end{frontmatter}


\section{Introduction} \label{intro}



The study of root-finding to an equation through an iterative function has a very long history~\cite{nord22}. 
An algorithm, which is a cornerstone to the modern study of root-finding algorithms was made by Newton through his `method of fluxions'. 
Later on, this method was polished by Rapshon to produce what we now know as the Newton--Raphson method~\cite{nord22,cajo11,ypma95}.

Suppose we want to solve a nonlinear equation $f(z) = 0$ numerically, where $z \in \mathbb{R}$ and the function $f: \mathbb{R} \to \mathbb{R}$ is at least once differentiable. Starting from some $z_0$, the Newton--Raphson method uses the iterations:
\begin{equation}
z_{n + 1} = z_n - \frac{f(z_n)}{f'(z_n)}, \qquad \textmd{for} \; n \in \mathbb{N}. \label{NR1}
\end{equation}
If the initial value $z_0$ is close enough to a simple root of $f(z)$, this iteration converges quadratically. 
If the root is non-simple, the convergence becomes linear. 
Since then, a tremendous amount of effort has been made in the direction of improving the convergency and or the simplicity of the method resulting in modified Newton--Raphson or Newton--Raphson-like methods~\cite{schr70}. For a rather extensive review, see, e.g.,~\cite{orte70,trau82}.

During the last couple of years, the number of papers on Newton--Raphson-like methods is still increasing~\cite{yama00}. 
The work of Abbasbandy~\cite{abba04} on applying Adomian's decomposition method~\cite{adom94} to improve the accuracy of Newton--Raphson method also demonstrates this. 
Some other methods, such as the homotopy method~\cite{he04,liao05}, may also be applied to provide an improved Newton--Raphson method. 
An example is a proposal by Wu~\cite{wu05} where the author employs the method to avoid the singularity in the Newton--Raphson algorithm due to the presence of $f'(z_0) = 0$ for some initial value $z_0$. 
It is clear that the study of iteration methods will still be a versatile field for the coming years. 
At this stage, it might be necessary to have a review note that carefully lists and summarizes most (if not all) of important new proposals and improvements in root-finding algorithms. 
This is needed -- one of which is -- to avoid reinventions of `something known to somebody else'~\cite{petk99,petk07a}. 

As an example, several modified iteration algorithms presented by Sebah and Gourdon~\cite{sg01} have been reinvented recently by Pakdemirli and Boyac{\i}~\cite{pb07}
using perturbation theory.

The Newton--Raphson method~\eqref{NR1} may also fail to converge. 
One of the reasons is if the function $f(z)$ has regions that will cause the algorithm to shoot off to infinity~\cite{kell75}. 
A natural question is then what is the boundary of the points where Newton's method fails to converge. 
The answer is non-trivial particularly if one uses the method to find complex roots $z \in \mathbb{C}$ of complex functions $f: \mathbb{C} \to \mathbb{C}$~\cite{hubb01}. 
The boundary, which is called `Julia set' and its complement `Fatou set', is now known to have a fractal geometry that graphically produces aesthetically pleasing figures~\cite{amat04,varo02,jaco07,peit86}. 
Yet, if one looks at the number of iterations needed for the Newton--Raphson algorithm to converge to a root of the function, the iterative method is chaotic. 
Using starting values $z_0$ close to any of the roots would result in a rapid convergency. 
However, for values where $z_0$ is near the boundaries of the `basins of attraction' in the complex plane, one will find that the method becomes extremely unpredictable~\cite{szys91,pick88,pick90}. 
In this context, a basin of attraction refers to the set of initial conditions leading to long-time behaviour that approaches the attractor(s) of a dynamical system~\cite{ott06}. 
Interestingly, along with the basin boundaries, there are \emph{nodules} that have complex properties~\cite{reev91,reev92}. 
Inside a nodule, the method again becomes regular, i.e.\ the method converges nicely to a root.
\begin{figure}[b!]
\begin{center}
\includegraphics[width=0.45\textwidth]{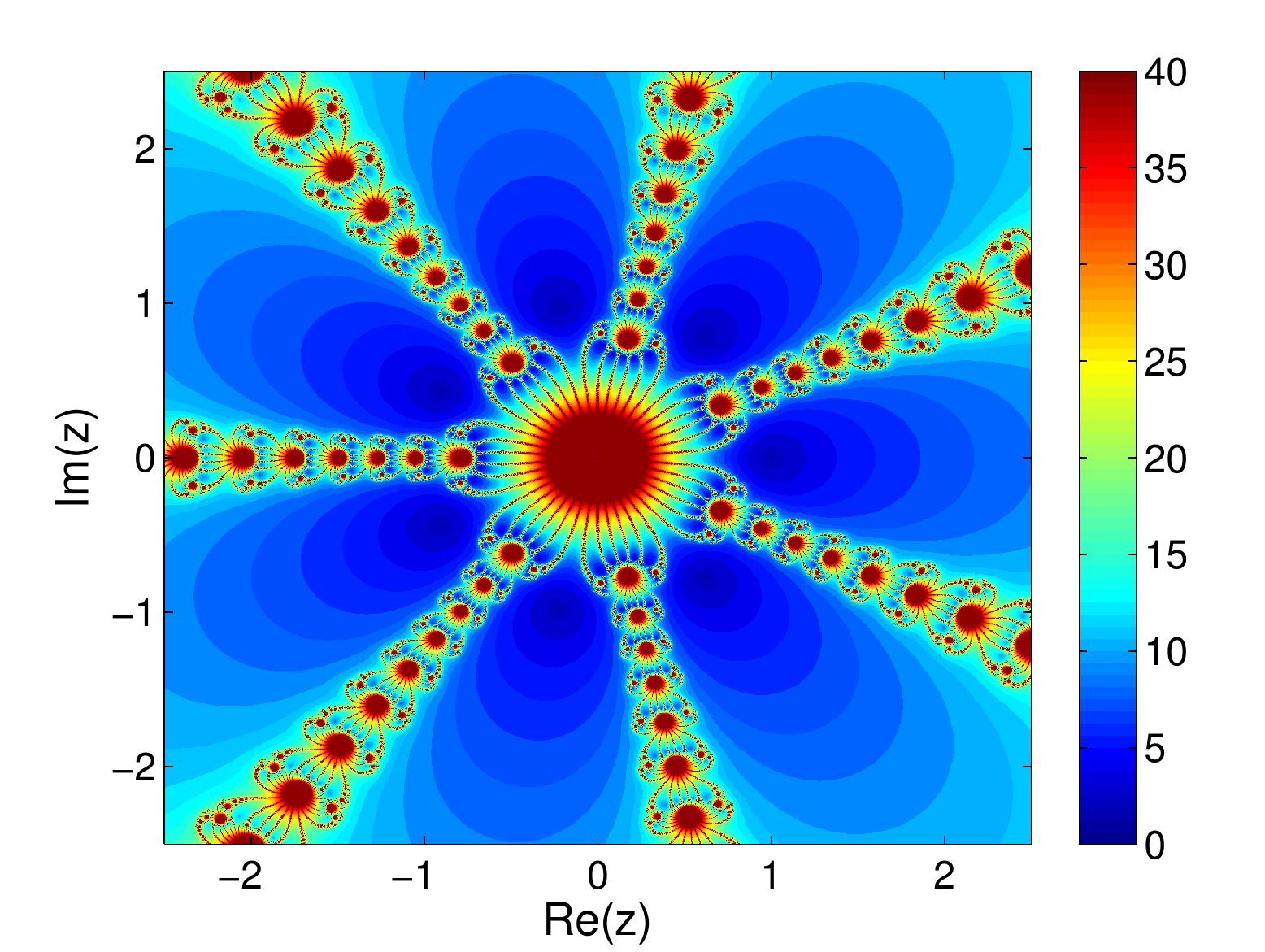}
\includegraphics[width=0.45\textwidth]{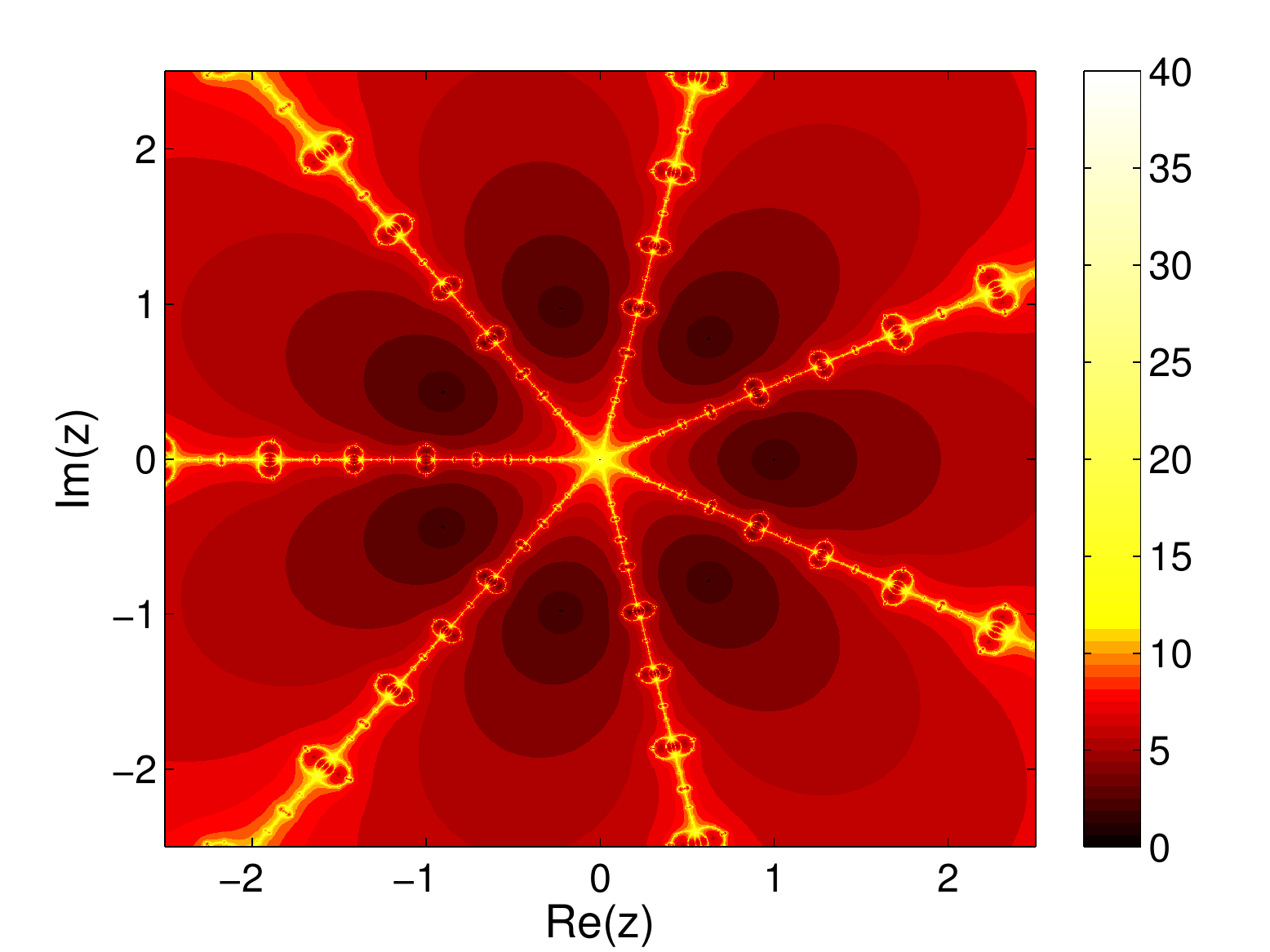}
\includegraphics[width=0.45\textwidth]{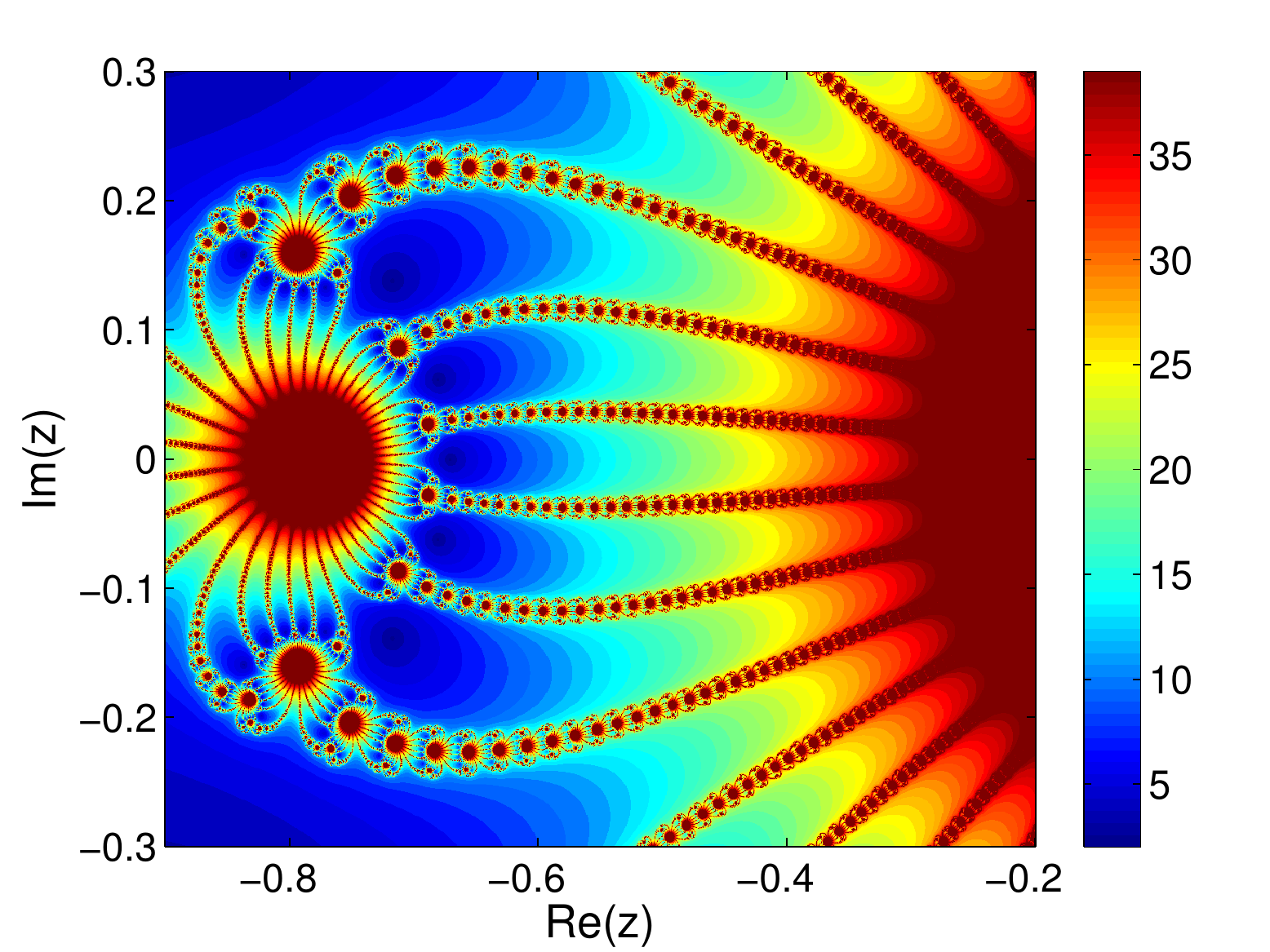}
\includegraphics[width=0.45\textwidth]{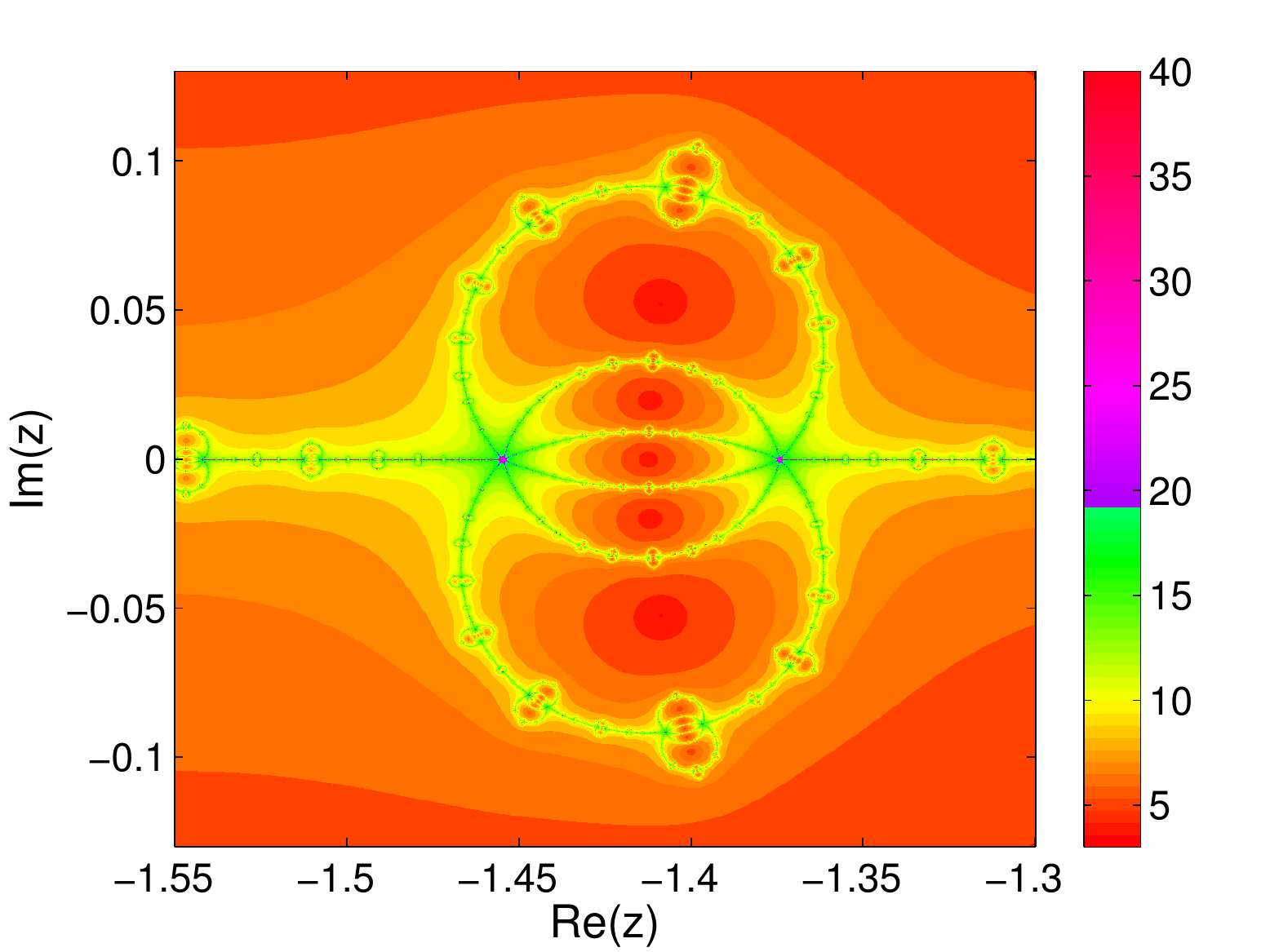}
\end{center}
\caption{A number of iterations needed by Newton's method (right panels) and Halley's method (left panels) to converge to one of the roots of $f(z) = z^7 - 1$ as a function of the initial condition. Bottom panels show one nodule of each method.}
\label{fig1}
\end{figure}

To illustrate, consider the function $f(z) = z^7 - 1$. Using Newton's method, we are looking for the roots of this function. 
Figure~\ref{fig1} shows the number of iterations needed for the algorithm to converge to one root of $f(z)$. 
The iteration is said to be non-convergent if after 40 iterations it does not get close to one of the zeros. 
More technical details in obtaining this figure will be explained in the subsequent sections. 
As a comparison, we also consider the corresponding figure for finding the roots of the function $h(z) = f(z)/\sqrt{f'(z)}$, i.e.\ solving $f(z) = 0$ using Halley's method. 
One can see that non-convergent regions are still present in Halley's method but has a smaller dimension than Newton's method. 
It is then also another purpose of searching modified Newton's methods, i.e.\ to obtain an iterative method that converges to a root with almost any starting point.

Regarding this purpose, to show that a new method is better than an established one, usually the proposer applies his/her new algorithm to find a root of a function and compare its convergency rate with that of an established method. 
Unfortunately, this procedure does not prove anything as the proposer might have randomly picked a starting point that belongs to the convergent regions of the new method. 
This can raise a contradictory result such as shown in, e.g.,~\cite{petk07b}. This is the background of this paper.

In this paper, we revisit the chaotic behaviour of the iteration number needed by Newton's method to converge to a root. 
We consider a family of simple Newton's methods containing a parameter and study the behaviour of the convergent regions as a function of that parameter. 
We will demonstrate that even a simple iterative algorithm can have a complex convergent region.

The analysis presented in this paper employs the techniques of `polynomiography', which is defined as the art and science of visualization in the approximation of the zeros of complex polynomials~\cite{kal04,jin07}. 
Basins of attractions of Newton's methods and of higher order methods were investigated in~\cite{kal04,jin07}. 
However, we investigate here the convergent regions of a modified Newton's method for solving nonlinear equations that depend on two parameters. 
By fixing one parameter and allowing the other parameter to vary, we present some interesting results in connection to the basins of attraction of this modified Newton's methods which are not reported in those two papers.

We outline the present paper as the following. In Section~2, we discuss our modified Newton algorithm. 
Our discussion and analysis of the algorithm are discussed in Section~3. Finally, we conclude and summarize our work in Section~4.

\section{Modified method and convergence analysis}

In the following, we propose to search the roots of a continuously differentiable function $f(z) = 0$ through solving $g(z) = 0$, where $g$ is defined as
\begin{equation}
g(z) = (f(z))^{a_0} \, (f'(z))^{a_1}. \label{gsatu}
\end{equation}
It is obvious that the zero(s) of $f$ would be the zero(s) of $g$ as well. Consequently, the Newton--Raphson iteration~\eqref{NR1} gives
\begin{equation}
z_{n + 1} = F(z_n) = z_n - \frac{f(z_n) \, f'(z_n)}{a_0 \, (f'(z_n))^2 + a_1 \, f(z_n)\,f''(z_n)}. \label{m1}
\end{equation}

The iteration~\eqref{m1} is Newton's method for finding multiple roots when $a_0 = 1, \, a_1 = -1$ and it is Halley's method when $a_0 = 1, \, a_1 = -1/2$. 
When $a_1 = 0$, the iteration~\eqref{m1} is underrelaxed Newton's method for $a_0 > 1$ and overrelaxed one for $a_0 < 1$. 
Hence, we may regard our new function as an interpolant of several methods. 
Using the terminology of Kalantari, we simply consider a modified version of the first member of the `basic family'~\cite{kal00}. 
Then, we have the following convergence proposition. For simplicity, the proposition considers function $f$ applied to real numbers only.
\begin{proposition}
  Let $\alpha \in I$ be a simple zero of a sufficiently differentiable function $f: I \rightarrow \mathbb{R}$ for an open interval $I$. 
  For $a_0 \neq 1$, the iteration method~\eqref{m1} has first order convergence. 
  For $a_0 = 1$ and $a_1 \neq -1/2$, the Newton--Raphson iteration method~\eqref{m1} has second order convergence and satisfies the following error equation:
  \begin{equation}
    e_{n+1} = b_{32} e_{n}^{2} + O(e_{n}^{3})
  \end{equation}
  where $e_{n} = x_{n} - \alpha$ and ${\displaystyle b_{32} = (1/2 + a_{1}) \frac{f''(\alpha)}{f'(\alpha)}}$.
\end{proposition}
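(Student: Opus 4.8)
The plan is to perform a Taylor expansion of the iteration map $F$ in~\eqref{m1} about the simple root $\alpha$ and to read off the leading behaviour of $e_{n+1} = x_{n+1} - \alpha$ in terms of $e_n = x_n - \alpha$. Write $A := f'(\alpha) \neq 0$ and set $c_k := f^{(k)}(\alpha)/(k!\,A)$ for $k \geq 2$, so that, since $f(\alpha) = 0$, near $\alpha$ one has
\begin{equation}
f(x_n) = A\bigl(e_n + c_2 e_n^2 + c_3 e_n^3 + \cdots\bigr), \qquad f'(x_n) = A\bigl(1 + 2c_2 e_n + 3c_3 e_n^2 + \cdots\bigr),
\end{equation}
together with $f''(x_n) = A\bigl(2c_2 + 6c_3 e_n + \cdots\bigr)$.

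First I would expand the numerator of~\eqref{m1}: multiplying the two series gives $f(x_n)f'(x_n) = A^2\bigl(e_n + 3c_2 e_n^2 + O(e_n^3)\bigr)$. Next I would expand the denominator using $(f'(x_n))^2 = A^2\bigl(1 + 4c_2 e_n + O(e_n^2)\bigr)$ and $f(x_n)f''(x_n) = A^2\bigl(2c_2 e_n + O(e_n^2)\bigr)$, which gives $a_0(f'(x_n))^2 + a_1 f(x_n)f''(x_n) = A^2\bigl(a_0 + (4a_0 + 2a_1)c_2 e_n + O(e_n^2)\bigr)$. Dividing these two series — which needs a geometric-series expansion of the reciprocal of the denominator about its constant term $a_0$, the step most likely to produce algebraic slips, so I would keep terms only through order $e_n^2$ — and subtracting the quotient from $e_n$ produces the error recursion
\begin{equation}
e_{n+1} = \Bigl(1 - \frac{1}{a_0}\Bigr)e_n + \frac{(a_0 + 2a_1)\,c_2}{a_0^2}\,e_n^2 + O(e_n^3).
\end{equation}

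From this recursion both parts of the proposition follow immediately. If $a_0 \neq 1$, the coefficient $1 - 1/a_0$ of $e_n$ is nonzero, so the convergence is first order. If $a_0 = 1$, that linear term vanishes and the recursion collapses to $e_{n+1} = (1 + 2a_1)\,c_2\,e_n^2 + O(e_n^3)$; substituting $c_2 = f''(\alpha)/(2f'(\alpha))$ rewrites the quadratic coefficient as $b_{32} = (1/2 + a_1)\,f''(\alpha)/f'(\alpha)$, which is exactly the claimed error equation, and when in addition $a_1 \neq -1/2$ (so that $b_{32} \neq 0$, assuming $f''(\alpha) \neq 0$) the convergence is of second order. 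The only real obstacle is bookkeeping: a short degree count is needed to confirm that the $O(e_n^2)$ remainder in the denominator and the $O(e_n^3)$ remainder in the numerator cannot influence the order-$e_n^2$ coefficient of $e_{n+1}$, and they do not.
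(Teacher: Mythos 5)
Your proof is correct and follows essentially the same route as the paper: a Taylor expansion about the simple root $\alpha$, from which the linear coefficient $1-1/a_0$ gives first-order convergence for $a_0\neq 1$ and the quadratic coefficient reduces to $b_{32}=(1/2+a_1)f''(\alpha)/f'(\alpha)$ when $a_0=1$. The only difference is mechanical -- you expand the numerator and denominator of $F$ as power series in $e_n$ and divide, whereas the paper differentiates $F$ and quotes $F'(\alpha)$ and $F''(\alpha)$ directly -- and your coefficients agree with the paper's (indeed, your version displays the intermediate algebra that the paper omits).
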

\begin{proof}
  Let $\alpha$ be a simple zero of $f$. Consider the iteration function $F$ defined by
  \begin{equation}
      F(x) = x - \frac{f(x) \, f'(x)}{a_0 \, (f'(x))^{2} + a_{1} \, f(x) \, f''(x)}.
  \end{equation}
  Performing some calculations we obtain the following derivatives:
  \begin{eqnarray}
    F(\alpha)   &=& \alpha, \\
    F'(\alpha)  &=& 1 - 1/a_0, \label{c1}\\
    F''(\alpha) &=& \frac{1}{a_0}\left(1 + 2 \frac{a_1}{a_0}\right) \frac{f''(\alpha)}{f'(\alpha)}.
  \end{eqnarray}
  From the Taylor expansion of $F(x_n)$ around $x = \alpha$ we have
  \begin{eqnarray}
    x_{n + 1} &=& F(\alpha) + F'(\alpha)(x_n - \alpha) + \frac{F''(\alpha)}{2!}(x_n - \alpha)^2 + {\mathcal{O}}((x_n - \alpha)^3).
  \end{eqnarray}
  From the first derivative of the iteration function $F$ at $x = \alpha$~\eqref{c1}, it is clear that if $a_0 \neq 1$, then
  \begin{equation}
    x_{n + 1} = \alpha + b_{21} e_n + {\mathcal{O}}(e_n^2), \label{orde1}
  \end{equation}
  where $e_n = x_n - \alpha$ and $b_{21} = 1 - 1/a_0$.
  If one imposes $a_0 = 1$, we have
  \begin{equation}
    x_{n + 1} = \alpha + b_{32} e_n^2 + {\mathcal{O}}(e_n^3), \label{ordedua}
  \end{equation}
  where ${\displaystyle b_{32} = (1/2 + a_{1}) \frac{f''(\alpha)}{f'(\alpha)}}$.
\end{proof}

\begin{remark} \normalfont
When $a_0 = 1$ and $a_1 = -1/2$, the iteration~\eqref{m1} is the well-known Halley's method that has third order convergence as can be proven immediately from equation~\eqref{ordedua}. 
Note that an interesting generalized result connected to the iteration~\eqref{m1} was given by Gerlach~\cite{gerl94}, where it is shown that if
\begin{equation}
  F_{2}(z) = f(z) \qquad \textmd{and} \qquad F_{m + 1}(z) = F_{m}(z) \left[ F_{m}'(z) \right]^{-1/m}, \qquad \textmd{for} \quad m \geq 2,
\end{equation}
then the iterative method
\begin{equation}
  z_{k + 1} = z_{k} - \frac{F_{n + 1}(z_{k})}{F_{n + 1}'(z_{k})} \qquad \textmd{for} \quad k = 0, 1, 2, \dots,
\end{equation}
will have the order of convergence $n + 1$. A particular choice of $n = 2$ also yields Halley's method of the third order convergence.
\end{remark}

\begin{remark} \normalfont
Although one could introduce the function~\eqref{gsatu} in a more generalized form, i.e.
\begin{equation}
  \displaystyle g(z) = \prod_{n=0}^{N} \left(f^{(n)}(z)\right)^{a_n},    \label{gfunc}
\end{equation}
where it is assumed that $f(z)$ is $N$ times continuously differentiable and $a_n \in {\mathbb R}$, $n = 0, 1, 2, \dots, N$, 
it is observed that this generalized function~\eqref{gfunc} does not serve the intended purpose of studying the convergent regions of the modified Newton's method for solving nonlinear equations. 
For $N \geq 2$, it simply does not lead to any higher order methods.
\end{remark}

For the rest of this paper, we will concentrate on the modified Newton's method~\eqref{m1}. 
We fix the parameter $a_0 = 1$ and allow the other parameter $a_1$ to vary. 
In particular, we only consider a function $f(z) = z^7 - 1$ as our chosen example. 
It is observed that the basins of attraction of this convergence method can have a very complicated form.

\section{Numerical results}

Graphical presentations of the number of iterations for the function defined in Equation~\eqref{m1} to converge to a root for two particular values of $a_1$ are shown in Figure~\ref{fig1}.
\begin{figure}[tbh!]
\begin{center}
\includegraphics[width=0.45\textwidth]{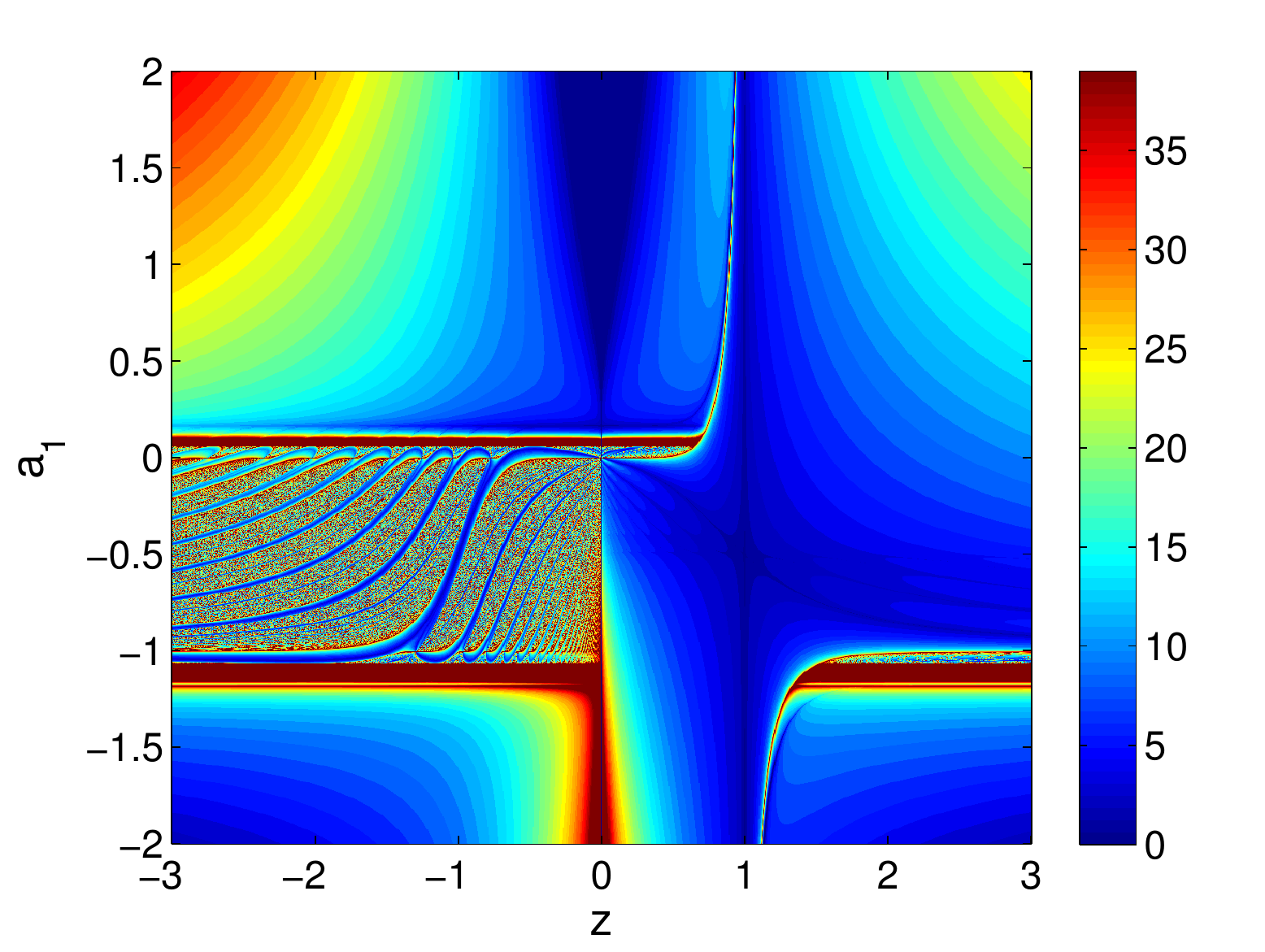}
\includegraphics[width=0.45\textwidth]{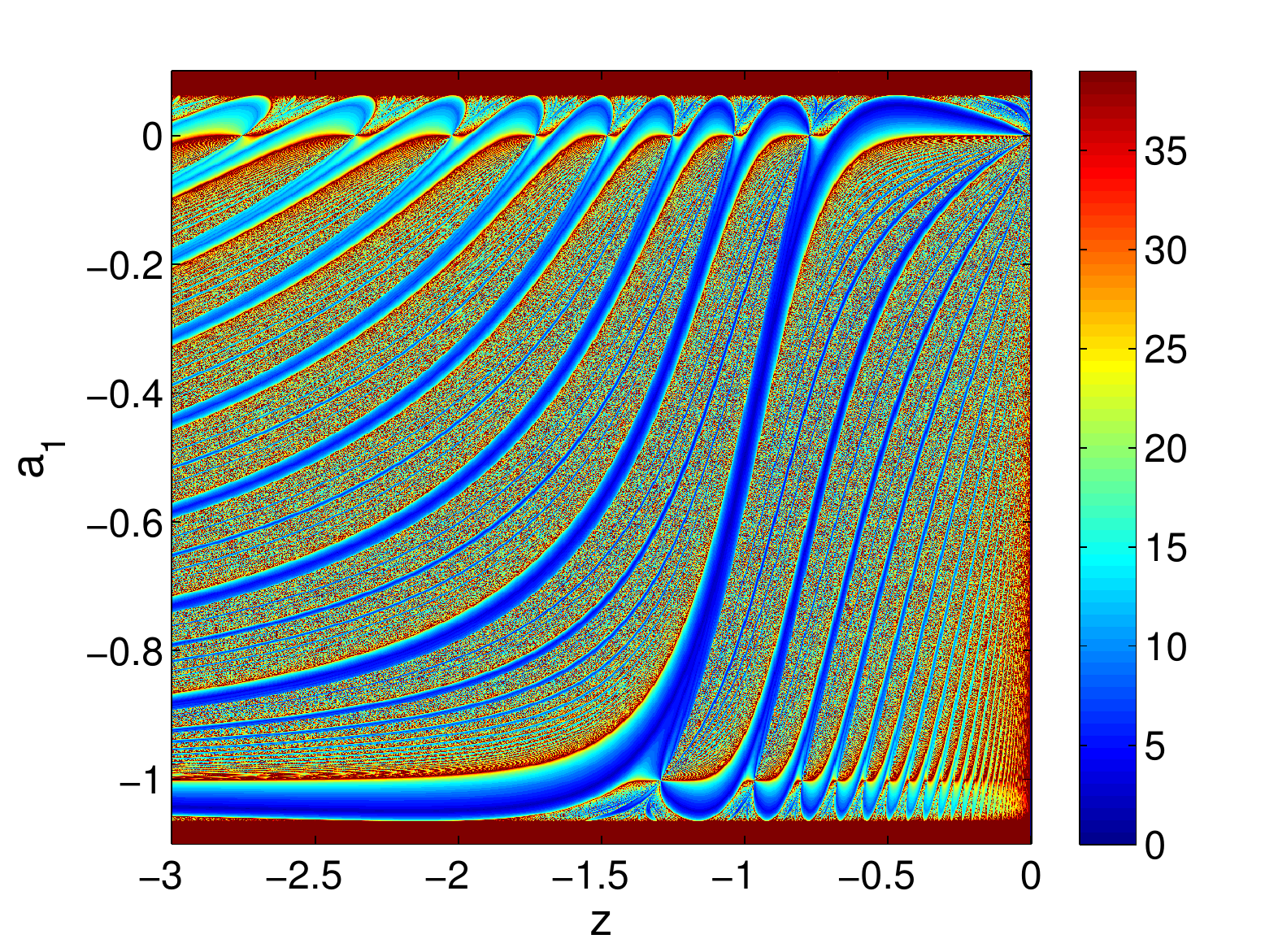}
\includegraphics[width=0.45\textwidth]{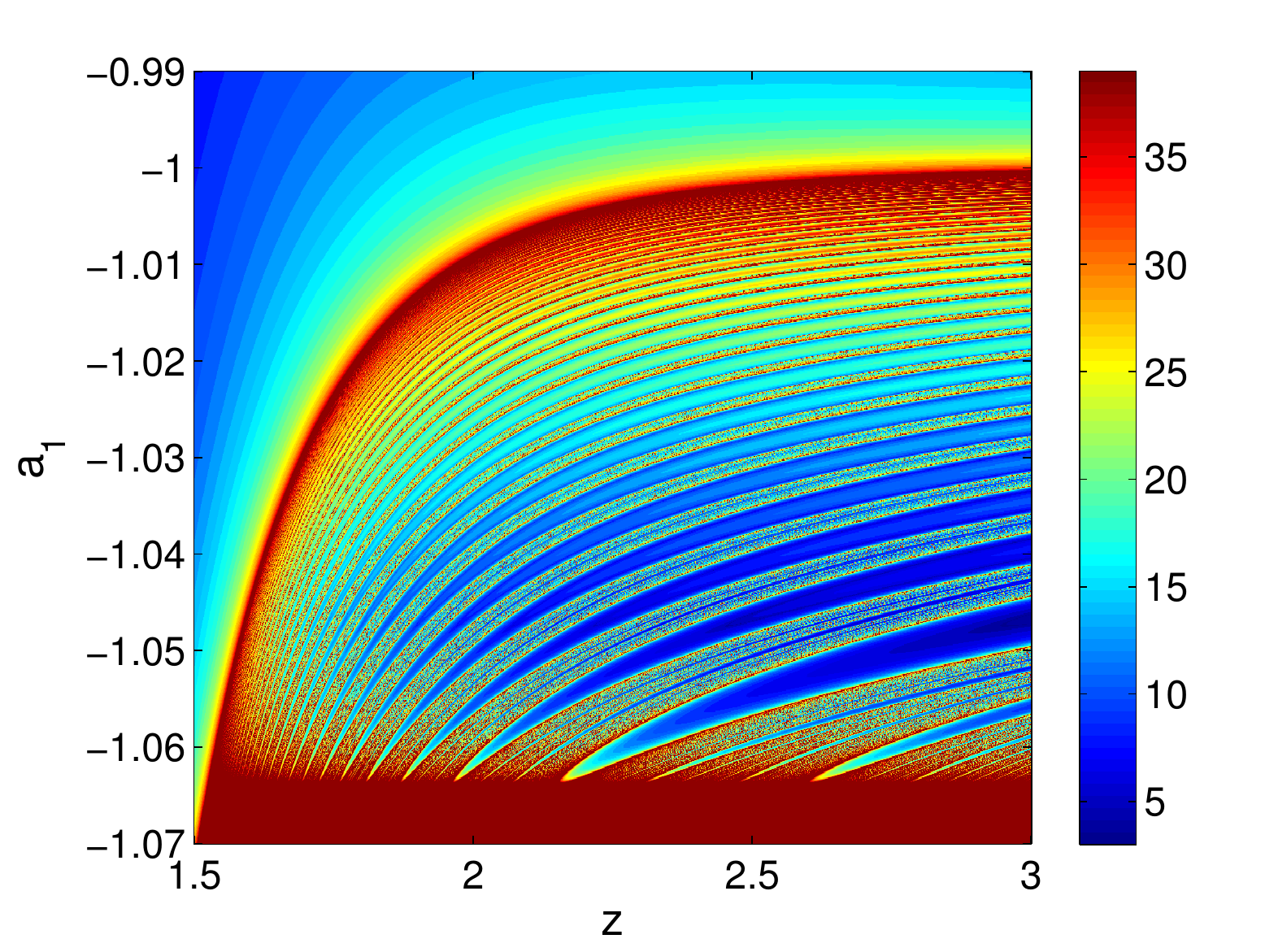}
\includegraphics[width=0.45\textwidth]{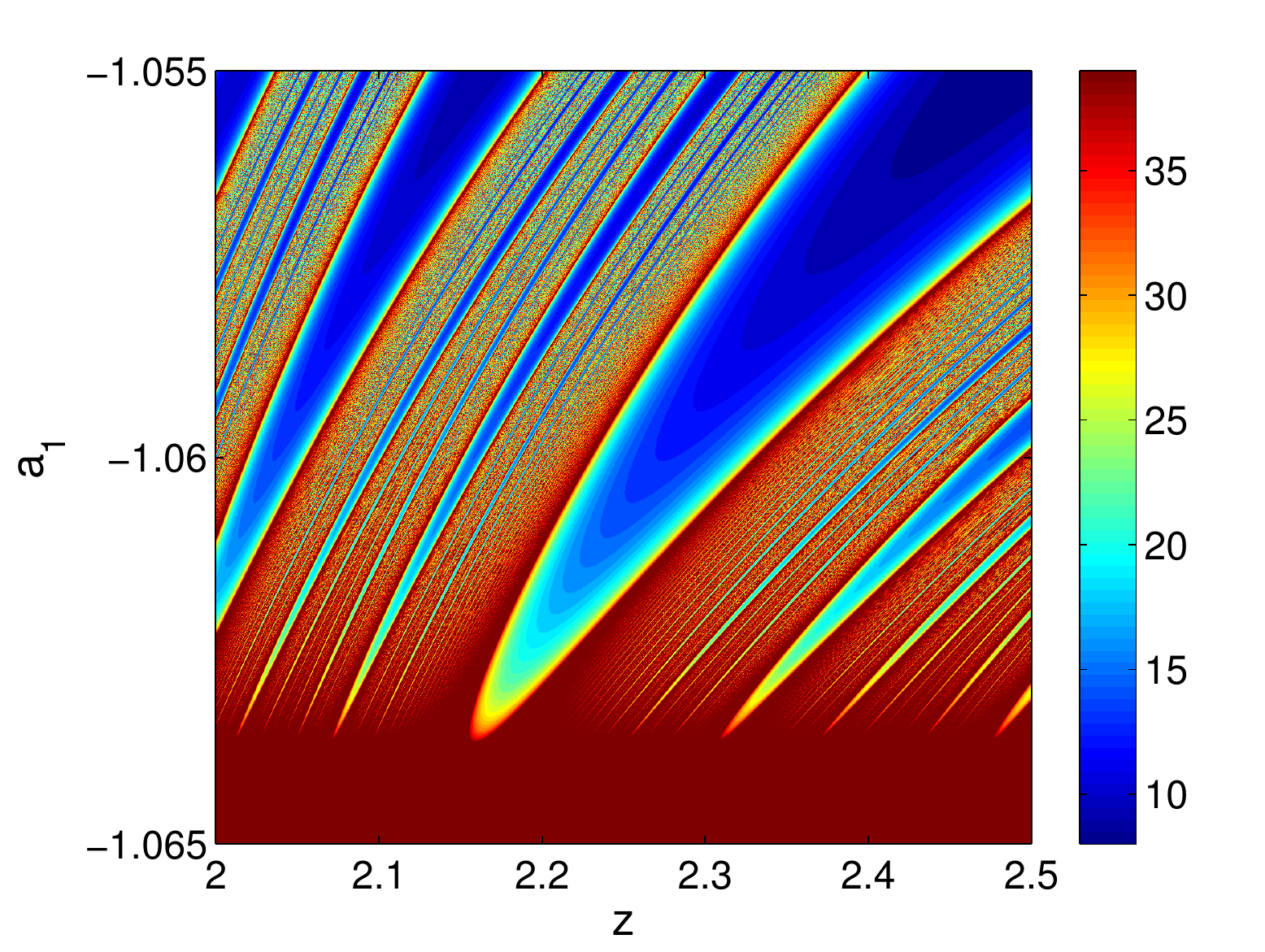}
\end{center}
\caption{Number of iterations needed by iterative method~\eqref{m1} in order to converge to a root of $f(z)$ in the $(z, a_1)$-plane. 
Here, $z \in \mathbb{R}$. Most of the chaotic regions are in the left half plane. 
Yet, there is a critical $a_1$ below which there is a chaotic region in the right half plane.} \label{fig2}
\end{figure}
To obtain the figures, we first take a rectangle $D$ in the complex $z$-plane and then apply the iterative method starting at every point in $D$. 
The numerical method can converge to the root or, eventually, diverges. 
As an illustration, we take a tolerance $\epsilon = 10^{-5}$ and a maximum of 40 iterations. 
So, we take $z_0 \in D$ and iterate $z_{n + 1} = F(z_n)$ up to $|f(z)| < \epsilon$. 
If we have not achieved the desired tolerance within the maximum iteration, then the iterative method is terminated and we say that the initial point does not converge to the root.

We have mentioned that Newton's method has a relatively different structure of basins of attractions as well as nodules from Halley's method. 
It is therefore of interest to see the influence of $\alpha$ on the convergence of the method. 
Since nodules appear along the boundaries of basins of attractions, it is easier if we apply the method in the domain along one of the boundaries. 
For our function $f(z) = z^{7} - 1$, the negative horizontal axis $x < 0$, i.e.\ Im$(z) = 0$, is an example where it separates basins of attractions of root 
$e^{6\pi i/7}$ and $e^{8\pi i/7}$. Our numerical results are presented in Figure~\ref{fig2} where one can observe the presence of regions of chaotic behaviour in the $(z,a_{1})$ plane.
\begin{figure}[tbh!]
\begin{center}
\subfigure[$a_1=0.05$]{\includegraphics[width=0.45\textwidth]{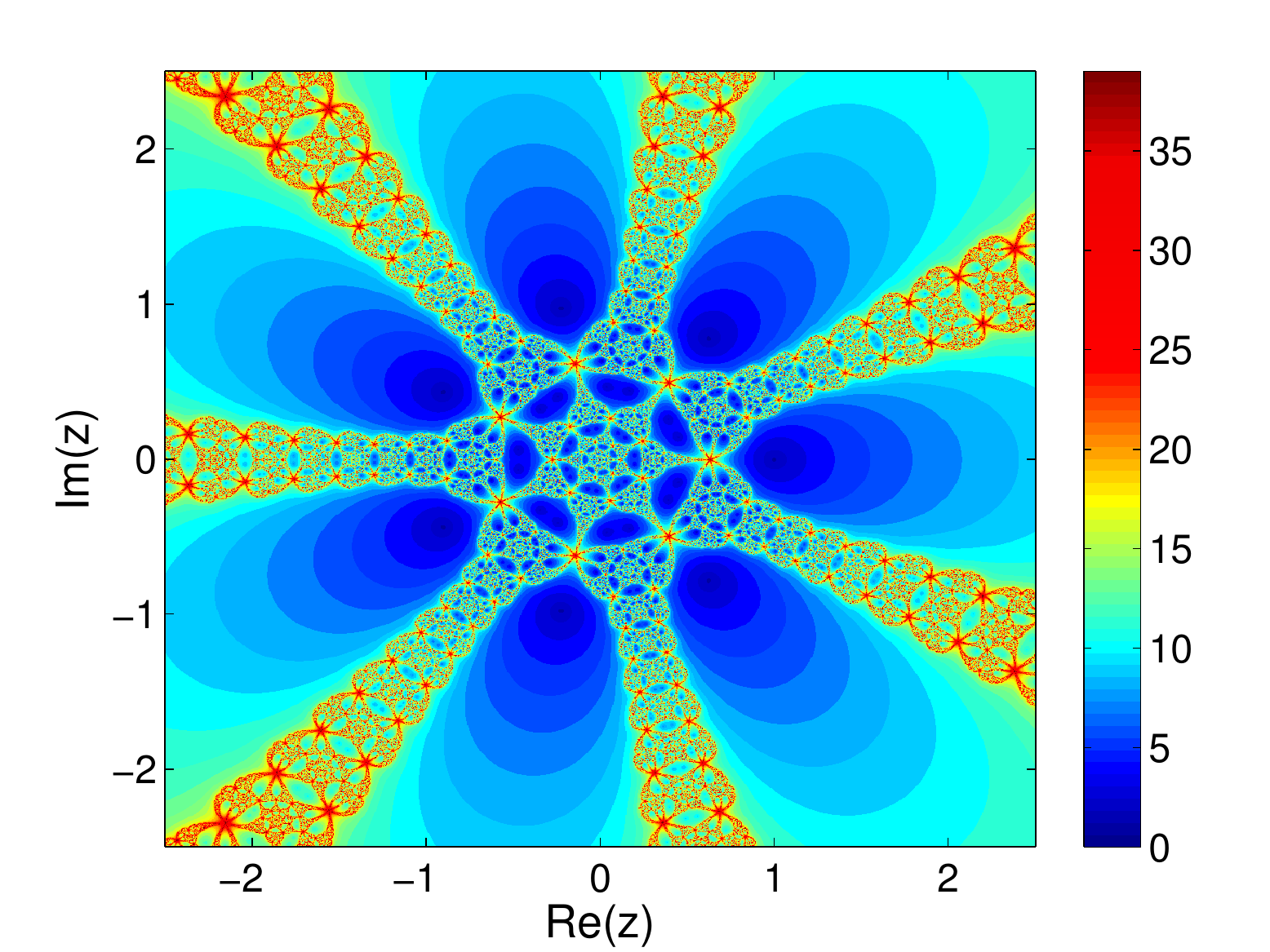}}
\subfigure[$a_1=-0.005$]{\includegraphics[width=0.45\textwidth]{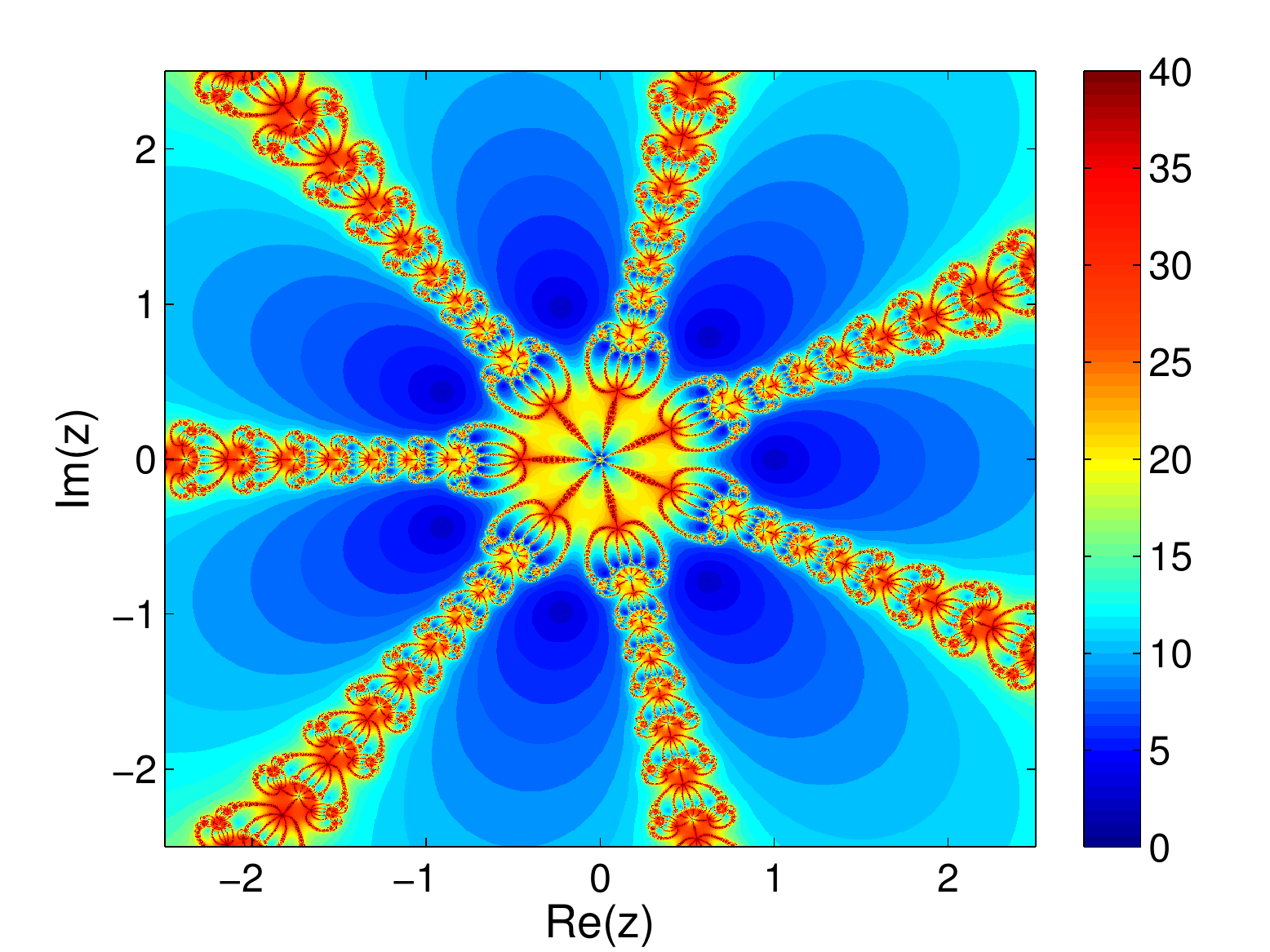}}
\subfigure[$a_1=-0.95$]{\includegraphics[width=0.45\textwidth]{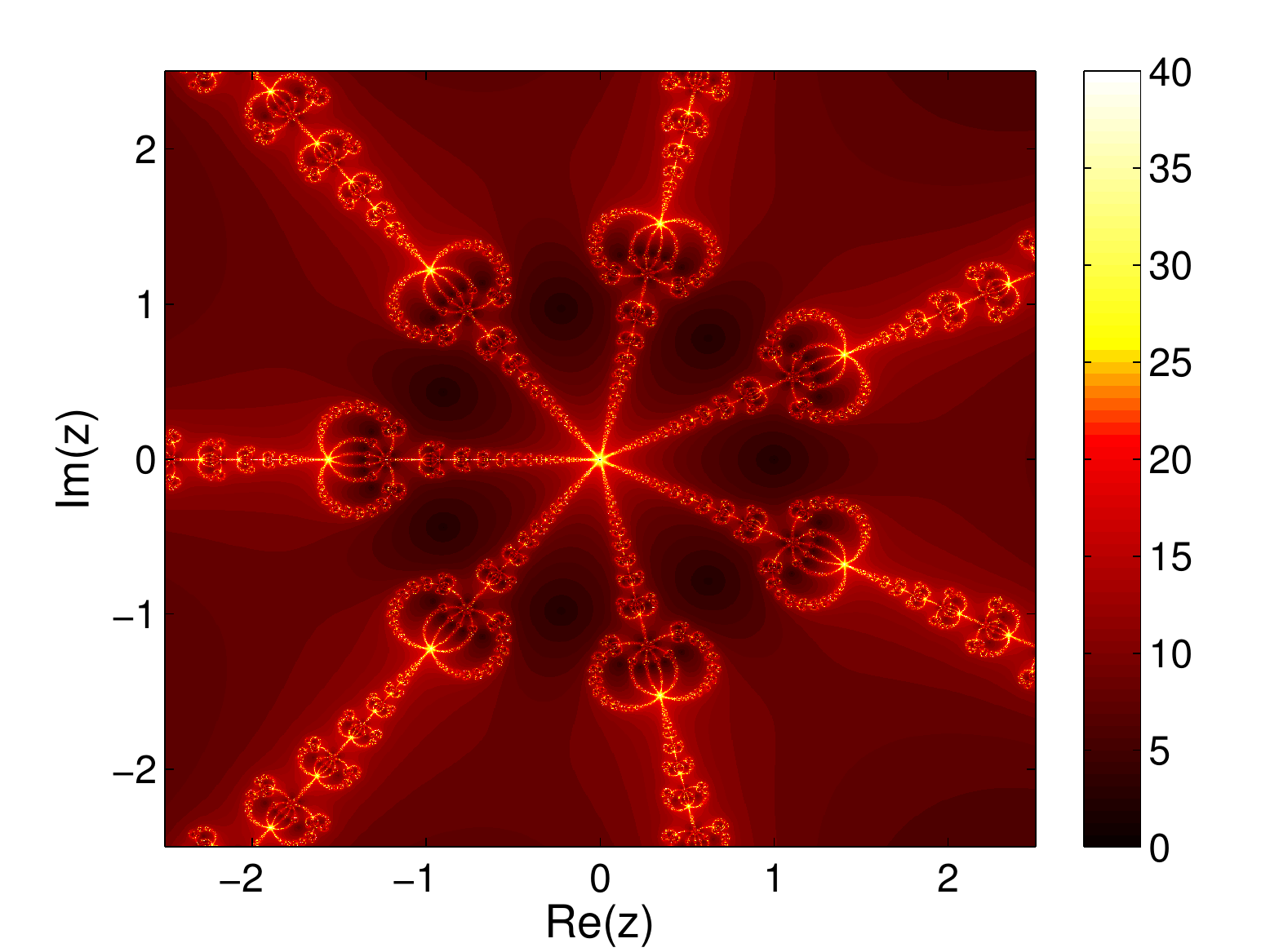}}
\subfigure[$a_1=-1.05$]{\includegraphics[width=0.45\textwidth]{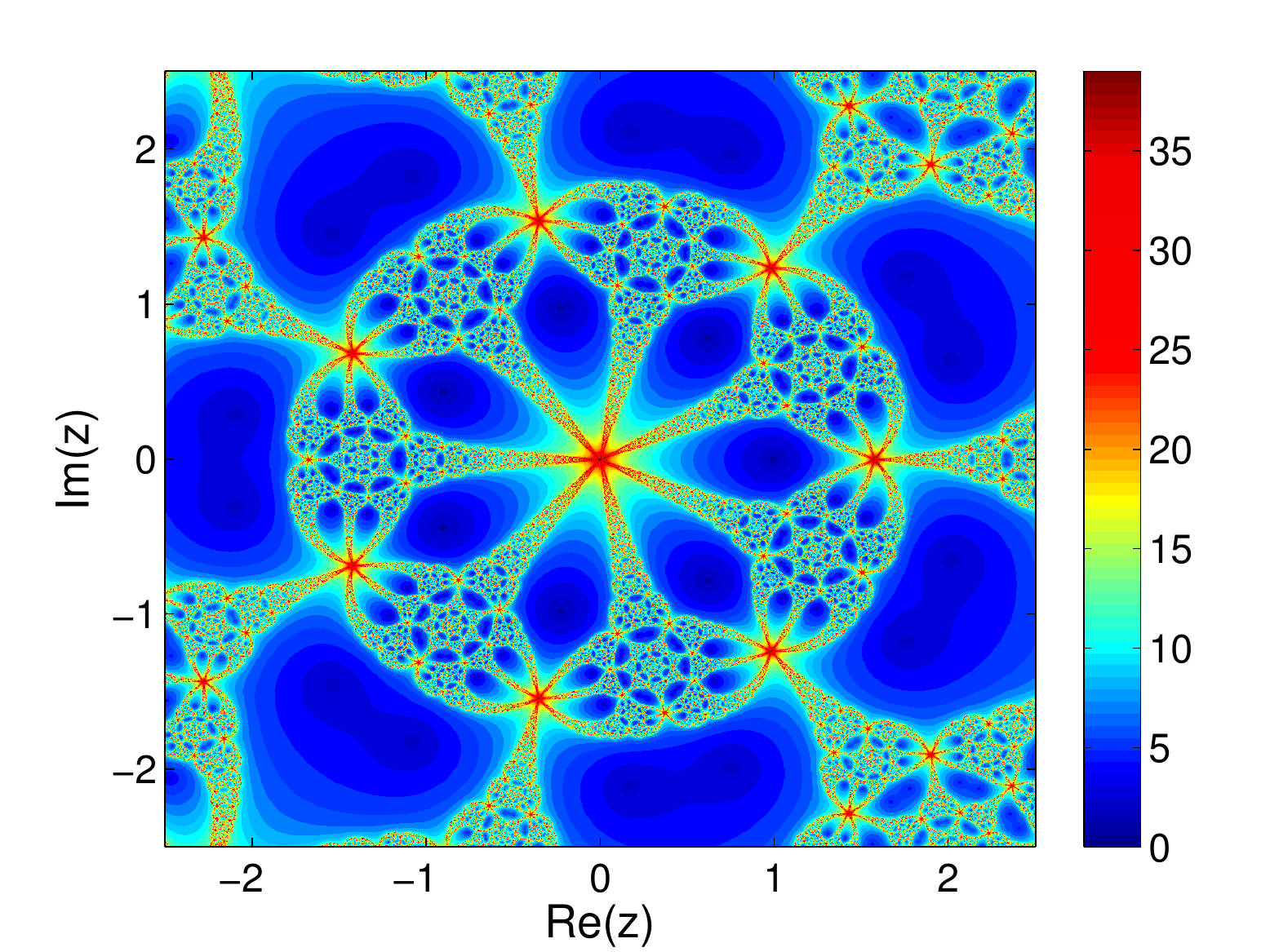}}
\subfigure[$a_1=-1.1$]{\includegraphics[width=0.45\textwidth]{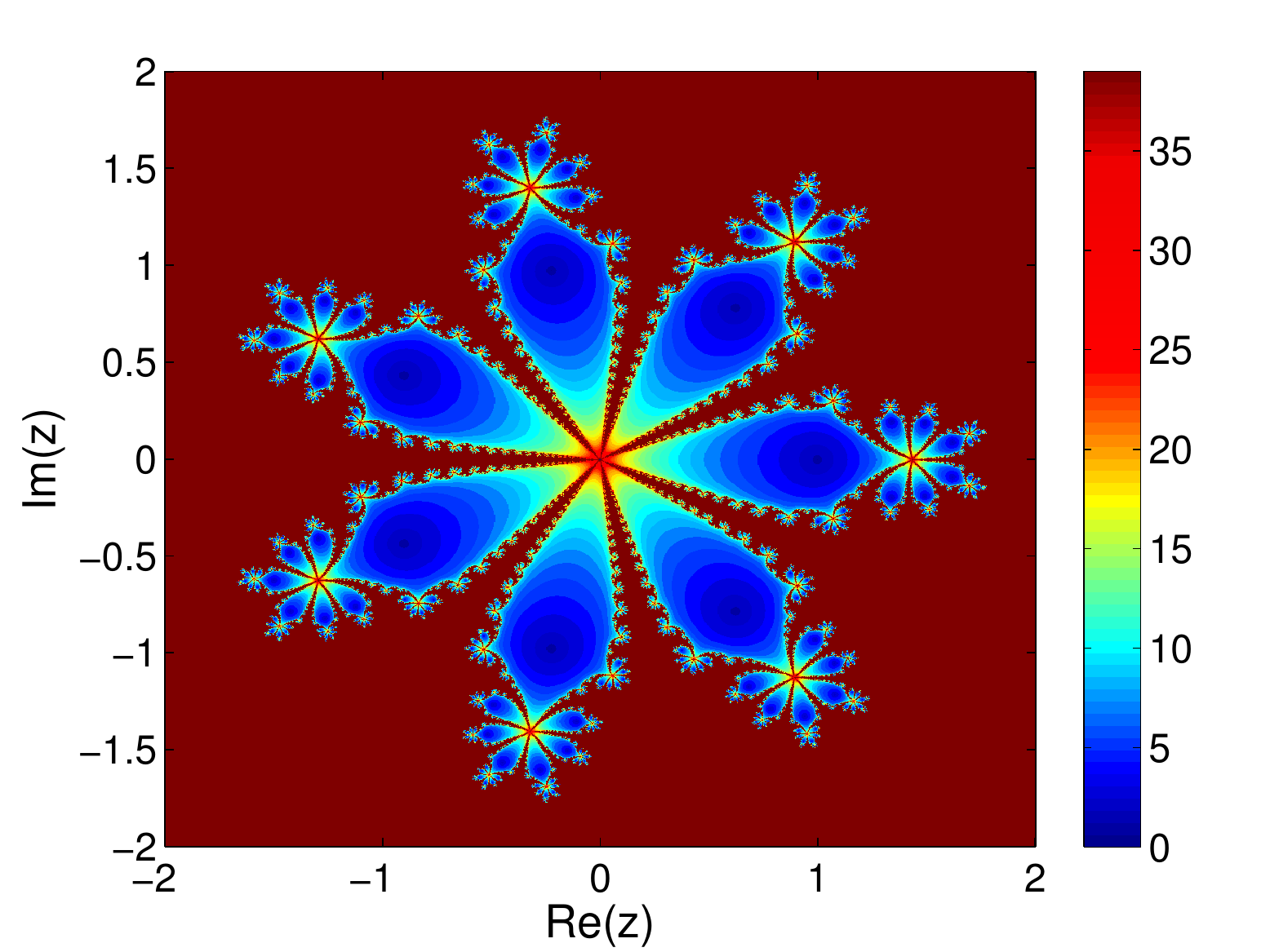}}
\subfigure[$a_1=-1.5$]{\includegraphics[width=0.45\textwidth]{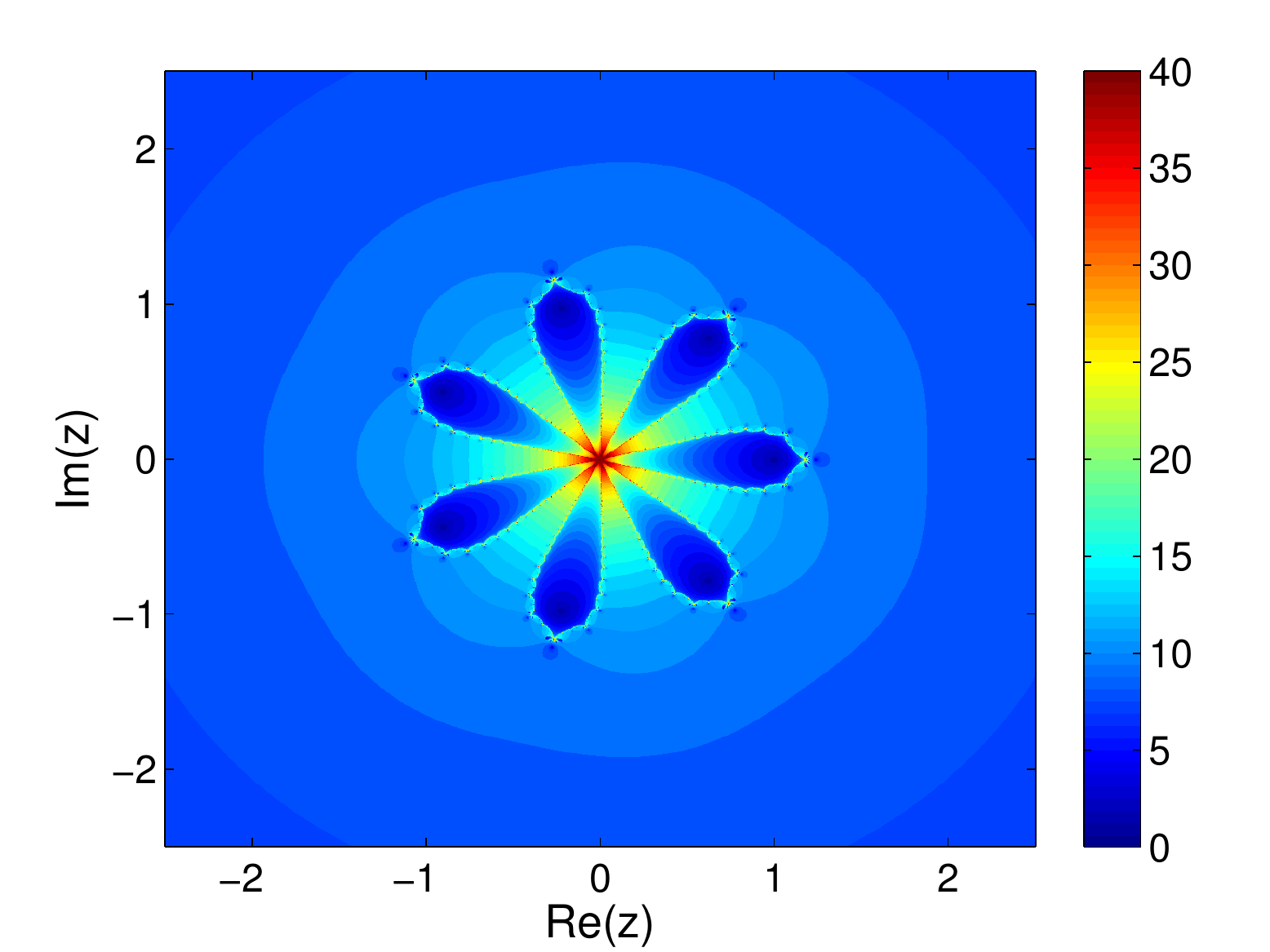}}
\end{center}
\caption{The same as Figure~\ref{fig1} but for some other values of $a_1$ showing interesting convergence behaviours.} \label{fig3}
\end{figure}

Nodules in the complex plane of Figure~\ref{fig1} are represented by the blue coloured regions in Figure~\ref{fig2}. 
Interestingly, one can note that nodules move in space as one varies $a_1$. 
Nonetheless, since the movement is not linear, at one point one can also note that if the value of $a_{1}$ decreases, the size of nodules, in general, will increase. 
From the top right panel of Figure~\ref{fig2}, we can clearly observe that as $a_1$ decreases towards $-1$ the size of nodules becomes larger.

The next point one can note is that when $a_1 > 0$, we basically add an additional zero to the function. 
Yet, from the top right panel of Figure~\ref{fig2}, we can see that there is a range of $a_1 > 0$ for which the method does not converge to $z = 0$, i.e.\
$z = 0$ is a repulsive unstable point. This is represented by the red\footnote{For the interpretation of colour in Figures~\ref{fig1}-\ref{fig3}, the reader is referred to the web version of this article.} coloured region.

Another important point that we need to note here is the presence of a range of $a_1 < 0$ where there is a chaotic region existing in the right half plane. 
It is notable because it implies that by varying $a_1$ we might split a basin of attractions in the complex plane into two basins or unite them into one basin. 
The transition from one case to the other is separated by the presence of a non-convergent region. 
This also means that there is a birth of nodules. 
A clear picture of the nodules birth from a non-convergent region can be seen at the bottom right panel of Figure~\ref{fig2}.

Finally, in Figure~\ref{fig3} we depict the number of iterations needed by our iterative method to converge to a root in the complex plane for several values of $a_1$. 
Regarding the nodules birth, one can compare, e.g., the left panels of Figures~\ref{fig1} and~\ref{fig3}b. 
Notice that the origin in Figure~\ref{fig1} is surrounded by a non-convergent region. 
Yet, in Figure~\ref{fig3}b there is a nodule appear in the middle of the region. 
In this case, we can say that a nodule has been born. 
The expansion of nodules along the boundaries of nodules (see subsequently Figures~\ref{fig1} and~\ref{fig3}c) that eventually makes networks of chaotic regions 
(see Figure~\ref{fig3}d) is another interesting behaviour of the iterative method considered in the present paper.

\section{Conclusions}

In this paper, we have considered a modified Newton method depending on a parameter that can be viewed as an interpolant between several known methods. 
We have utilized the iterative method to find the roots of a function. 
It has been demonstrated that even in such a simple method the convergence behaviour does depend complicatedly on the parameter. 
A point that belongs to the non-convergent region for a particular value of the parameter can be in the convergent region for another parameter value, 
even though the former might have a higher-order of convergence than the second. 
This then indicates that showing whether a method is better than the other should not be done through solving a function from a randomly chosen initial point and comparing the number of iterations needed to converge to a root.

\section*{Acknowledgements}
We would like to thank the anonymous referees for their constructive suggestions that greatly helped to improve the presentation of this paper.





\end{document}